\newtheorem{theorem}{Theorem}
\newtheorem{lemma}[theorem]{Lemma}
\newtheorem{proposition}[theorem]{Proposition}
\newtheorem{remark}[theorem]{Remark}
\title{Carr-Nadtochiy's weak reflection principle for Markov chains on $ \mathbf{Z}^d $}
\author
{
Yuri Imamura 
\footnote{Dept. of Business Economics, Tokyo University of Science, Japan, (mail: imamuray@rs.tus.ac.jp)}
}
\begin{document}

\maketitle

\begin{abstract} 
The present paper establishes a 
discrete version of the result
obtained by P. Carr and S. Nadtochiy in \cite {CN} for $1$-dimensional diffusion processes.  
Our result is for Markov chains 
on $ \mathbf{Z}^d $. 
\end{abstract}

\section{Introduction}
Let $ X^x $ be a diffusion  
process on $ \mathbb{R} $
starting from $ x $, 
and $\tau_y$ be the first time when $X$ visits $ y $, 
which is smaller than $ x $, that is,  
\begin{equation*}
\tau_y := \inf \{s>0: X^x_s \leq y \}.
\end{equation*}
In \cite{CN}, 
it is proven that,
$ X $ being in 
quite a general class,  
for a measurable function $ f $ which is zero on $ (-\infty, y] $ with some regularity conditions, there exists a function $ g $
which is zero on $ [y, \infty) $ such that for any $t>0$, 
\begin{equation}\label{eq-BNtrans}
\mathbf{E} [ f(X^x_t) 1_{\{\tau_y > t\}}]
=  E[g(X^x_t)]. 
\end{equation}
We call the correspondence
\begin{equation*}
f \mapsto g 
\end{equation*}
Carr-Nadtochiy transform.

The financial meaning of 
Carr-Nadtochiy transform
is 
as follows. 
\if1 
In this note, we consider a discrete version of 
Bayraktar-Nadtochiy transform:
let $ Y^\beta  $ be a 
Markov chain on $ \mathbf{Z} $ defined by
\begin{equation*}
P (Y^\beta_{k+1} = Y^\beta_{k} \pm 1| Y^\beta_{k}, \cdots, Y^\beta_0 )
= \frac{ 1 \pm \beta (Y_k) }{2},
k=0,1,\cdots, 
\end{equation*}
where $ \beta $ is a function on 
$ \mathbf{Z} $ taking values
in $ (-1,1) $,
and set
$ \tau^{\beta} $ to be 
the first hitting time of $ Y^\beta $ 
to $ 0 $:
\begin{equation*}
\tau^\beta = 
\min \{ k \in \mathbb{Z}_+ :
Y^\beta_k = 0 \}. 
\end{equation*}
We are interested in 
the existence and uniqueness of
$ T f $ for a function $ f $ on
$ \mathbf{N} $ with the following property
\begin{equation*}
\mathbf{E} [ f(Y^\beta) 1_{\{\tau^\beta > t\}}]
= E[Tf(Y^\beta_k)]
\end{equation*}
and $ Tf (x) = 0 $ for $ x \leq 0 $. 

When $ \beta \equiv 0 $, 
i.e. $ Y^\beta = Y^0 $ is a simple random walk, 
the transform is given by 
the reflection principle:
\begin{equation*}
T f (x) = f(-x).
\end{equation*}
\fi 
We consider 
a down and out option whose payoff at the maturity $T$ is given by  
\begin{equation*}
f(X^x_T) I_{\{\tau_y >T\}}, 
\end{equation*}
where $X_T$ is the stock price at the maturity, $y\ (\in (0,X_0))$ is a knock-out (lower) boundary, 
and $f$ is a payoff function of this option. 
We choose a portfolio of European type options with payoff $f$ 
and $-g$, where $ f \mapsto g $ is given by Carr-Nadtochiy transform.
Then it holds that,
for any $0<t<T$, 
\begin{equation}\label{1021-1}
\begin{split}
&\mathbf{E} [e^{-r(T-t)}  f(X^x_T) I_{\{\tau_y >T\}} \ | \mathcal{F}_t]
\\&=I_{\{t \leq T \wedge \tau_y \} }\mathbf{E} [e^{-r(T-t)}  f(X^x_T) I_{\{\tau_y >T\}} \ | \mathcal{F}_t]
\\& =I_{\{t \leq T \wedge \tau_y \} }\mathbf{E} [e^{-r(T-t)}  f(X^x_T) \ |\mathcal{F}_t]
-I_{\{t \leq T \wedge \tau_y \} }\mathbf{E} [e^{-r(T-t)}  g(X^x_T) \ |\mathcal{F}_t],
\end{split}
\end{equation}
where $r$ is the risk-free interest rate. 
The equation \eqref{1021-1}
shows that the down and out option can be hedged by
the \underline{static} portfolio of the European plain option with pay-off $ f $ and 
that with $ -g $ since
\begin{itemize}
\item if $ X $ never hit $ y $
until the maturity, 
the portfolio at the maturity 
pays $ f $, which hedges the down and out option that is active.
\item Once $ X $ hits $ y $, 
the hedger should liquidate the portfolio at $ \tau_y $. Thanks to \eqref{eq-BNtrans} with strong Markov property of $ X $, it costs zero. since the pay-off at the maturity is also zero, it is hedged.  
\end{itemize}

In \cite{CN} 
an analytic 
form of $g$ in \eqref{eq-BNtrans} for 
a class of $1$-dimensional diffusion processes 
whose volatility coefficients $\sigma$ and 
drift coefficients $\mu$, 
with some regularity conditions.
Without loss of generality the knock out (or knock in) boundary can be $\{0\}$. 
\begin{theorem}[Proposition $1$ in \cite{BN} and Theorem $2.7$ in \cite{CN}]\label{CN-CB}
Let $X$ be a diffusion process with a regularity condition, and $f$ be a function 
with $ \mathrm{supp} \, f  = [0, \infty) $ whose derivative is locally integrable.
Then there exists 
a continuous and exponentially bounded function $g$ with $
\mathrm{supp}\, g= (-\infty, 0] $ 
satisfying \eqref{eq-BNtrans}, 
which is given by 
\begin{equation*}
\begin{split}
g(x)
&= 
\frac{2}{\pi i } \int_{\gamma -i \infty}^{\gamma +i \infty}
\frac{w \psi_1(x,w)}{\partial_x \psi_1(o,w) -\partial_x \psi_2(o,w)} \\
& 
\int_{-\infty}^{0} \frac{\psi_1(z,w)}{\sigma^2(z)}
\exp 
\left(-2 \int_0^z \frac{\mu (y)}{\sigma^2(y)}\ dy \right)
f(z) \, dz \, dw
\end{split}
\end{equation*}
for any large enough $\gamma >0$, where $\psi_1$ and $\psi_2$ are the fundamental solution of the following 
Strum-Liouville equation 
\begin{equation*}
\frac{1}{2} \sigma^2(x) 
\frac{\partial^2}{\partial x^2} \psi (x,w) +\mu (x) \frac{\partial}{\partial x} 
\psi (x,w) -w^2\psi(x,w) =0
\end{equation*}
such that $\psi_1$ is square integrable function on $(-\infty, 0)$,
$\psi_2$ is square integrable function on $(0,\infty )$ 
and 
$\psi_1(0,w ) = \psi_2(0,w) =1$. 
\end{theorem}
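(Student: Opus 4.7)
The strategy is to take the Laplace transform in $t$ with spectral parameter $s = w^2$, reduce \eqref{eq-BNtrans} to a single resolvent identity at the barrier via the strong Markov property, represent both sides through the Green's function built from the Sturm--Liouville fundamental solutions $\psi_1, \psi_2$, and finally invert the resulting integral transform along a Bromwich contour.

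Setting $y = 0$ without loss of generality, I would first apply the strong Markov property at $\tau_0$: the pathwise continuity of $X$ together with the disjoint supports of $f$ and $g$ give
\[
\mathbf{E}^x[g(X_t)] = \mathbf{E}^x\bigl[1_{\{\tau_0 \leq t\}}(P_{t-\tau_0}g)(0)\bigr],
\]
\[
\mathbf{E}^x[f(X_t) 1_{\{\tau_0 \leq t\}}] = \mathbf{E}^x\bigl[1_{\{\tau_0 \leq t\}}(P_{t-\tau_0}f)(0)\bigr],
\]
so that a routine rearrangement of \eqref{eq-BNtrans} shows the identity is equivalent to the single scalar condition $(P_t f)(0) = (P_t g)(0)$ for all $t > 0$, where $P_t$ denotes the semigroup of $X$. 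Laplace transforming in $t$ with $s = w^2$ turns this into the resolvent identity $(R_{w^2}f)(0) = (R_{w^2}g)(0)$ in a right half-plane of $w$. Inserting the Green's function
\[
G(x,z,w) = \frac{\psi_1(x \wedge z, w)\,\psi_2(x \vee z, w)\,m(z)}{c(w)},\qquad c(w) := \partial_x \psi_1(0,w) - \partial_x \psi_2(0,w),
\]
with $m(z) = \frac{2}{\sigma^2(z)}\exp(\int_0^z \frac{2\mu}{\sigma^2}\,dy)$ the speed density, and using $\psi_1(0,w) = \psi_2(0,w) = 1$ together with the supports of $f,g$, collapses the identity at $0$ to the bilinear equation
\[
\int_0^\infty \psi_2(z,w)\,m(z)\,f(z)\,dz = \int_{-\infty}^0 \psi_1(z,w)\,m(z)\,g(z)\,dz. \quad (\ast)
\]

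To recover $g$ from $(\ast)$, I would invert the integral transform $g \mapsto \int_{-\infty}^0 \psi_1(\cdot,w)\,m(\cdot)\,g$ by a Bromwich contour integral along $\mathrm{Re}\,w = \gamma$. The substitution $s = w^2$ introduces the factor $2w\,dw$, and pairing with the eigenfunction $\psi_1(x,w)/c(w)$ yields an integral representation of $g(x)$ of precisely the form stated in the theorem, once $m(z)$ is rewritten via a Wronskian manipulation in terms of $\frac{1}{\sigma^2(z)}\exp(-2\int_0^z \mu/\sigma^2)$. The support condition $g(x) = 0$ for $x \geq 0$ follows from a contour-shift argument: for such $x$, $\psi_1(x,w)$ is the solution that grows at $+\infty$, so the contour can be deformed to $+\infty$ and the Bromwich integrand vanishes there; continuity and the exponential bound on $g$ then follow from uniform estimates of the integrand along the contour. \emph{The main obstacle} will be the rigorous justification of the Bromwich inversion and the contour shift: sharp WKB-type asymptotics for $\psi_1(\cdot,w), \psi_2(\cdot,w)$ and $c(w)$ as $|\mathrm{Im}\,w| \to \infty$ are required, and it is precisely at this step that the regularity conditions on $\sigma,\mu$ and the local integrability of $f'$ are exploited to secure absolute convergence and the interchange of integrals that identifies the Bromwich formula with the unique solution of $(\ast)$.
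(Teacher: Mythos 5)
You should first be aware that the paper contains no proof of Theorem \ref{CN-CB}: as the bracketed attribution indicates, it is quoted from Proposition 1 of \cite{BN} and Theorem 2.7 of \cite{CN}, and only the discrete analogue (Theorem \ref{theorem7}) is actually proved in this paper. So there is no internal argument to compare yours against; the relevant benchmark is the proof in the cited works, and your outline does follow their route --- strong Markov reduction to a value-matching condition at the barrier, Laplace transform to a resolvent identity, the Green's function $\psi_1(x\wedge z,w)\,\psi_2(x\vee z,w)\,m(z)/c(w)$, and Bromwich inversion with a contour shift to enforce $\mathrm{supp}\, g\subset(-\infty,0]$. Your equation $(\ast)$ is the correct reduced form of the problem.

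Two substantive caveats. First, the reduction to $(P_tf)(0)=(P_tg)(0)$ is not a ``routine rearrangement'' of \eqref{eq-BNtrans} as literally printed: with $\mathbf{E}[f(X_t^x)1_{\{\tau_0>t\}}]=\mathbf{E}[g(X_t^x)]$ your two strong-Markov identities leave an uncancelled term $\mathbf{E}^x[f(X_t)]$, and letting $t\downarrow 0$ shows the printed identity cannot hold for $x\in\mathrm{supp}\,f$. What your argument (and \cite{BN}, \cite{CN}, and the hedging identity \eqref{1021-1}) actually addresses is the knock-in form $\mathbf{E}[f(X_t^x)1_{\{\tau_0\le t\}}]=\mathbf{E}[g(X_t^x)]$, equivalently $\mathbf{E}[f(X_t^x)1_{\{\tau_0>t\}}]=\mathbf{E}[f(X_t^x)]-\mathbf{E}[g(X_t^x)]$; you should state this correction explicitly rather than attribute the step to rearrangement. (For the same reason, do not claim to recover ``precisely the form stated'': the displayed formula is itself garbled in transcription --- the inner integral is taken over $(-\infty,0)$ against an $f$ supported on $[0,\infty)$, which would vanish.) Second, everything after $(\ast)$ is a plan, not a proof: the existence of a $g$ solving $(\ast)$, the legitimacy of the Bromwich inversion, the vanishing of $g$ on $[0,\infty)$ by contour deformation, and the continuity and exponential bound all hinge on large-$|w|$ asymptotics of $\psi_1,\psi_2$ and $c(w)$, which is precisely the technical heart of \cite{CN} and which you only name as ``the main obstacle.'' As a roadmap your proposal is faithful to the original; as a proof it stops where the real work begins.
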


The Carr-Nadtochiy transform is 
looked upon as a generalization 
of the reflection principle. 
Let us explain the reason why. 
We say that a $1$-dimensional strong Markov process satisfies
Put-Call symmetry\footnote{Put-Call symmetry of a diffusion process was discussed in \cite{CL}. 
A sufficient condition of Put-Call symmetry is that the diffusion and the drift coefficients are symmetric 
with respect to the boundary. 
Most of strong Markov processes 
including the ones used in financial modeling,   
however, do not satisfy the Put-Call symmetry for any points.
In \cite {IIKO},
introduced is a scheme for {\em symmetrization}, with which
a diffusion is transformed to 
one with Put-Call symmetry. 
The scheme gives a 
numerical framework to
calculate the price of a barrier option.}
at $y$ if the law of $X_t$ and that of $2y-X_t$ coincide 
for any $t>0$ when $X$ starts from $y$. 
If $ X $ is with the put-call symmetry, 
Carr-Nadtochiy transform is given by $f(2y-x)$. 
The most important example 
with the Put-Call symmetry 
is $1$-dimensional Brownian motion. 
Actually, 
Brownian motion satisfies Put-Call symmetry for any point
thanks to the reflection principle. 

As we have seen, the put-call symmetry or the Carr-Nadtochiy transform allow us to construct a static hedge of barrier option. 
A multi-dimensional extension becomes very difficult, mainly because 
the boundary is not anymore a single point. 
Construction of Carr-Nadtochiy transform
for a multi-dimensional process 
is, to say nothing of, among the hardest.  
The main result (Theorem \ref{theorem7}) of the present paper therefore could be a step forward since the case where the boundary is an infinite set is managed.
The proof only needs basic toolkits from linear algebra. 
The result is actually a discovery rather than an invention. 

From a practical point of view, 
the explicit expression (Proposition \ref{prop}) will be useful to construct 
a specific static hedge in a multi-dimensional setting including stochastic volatility environment, 
which, though, we do not discuss
in detail in the present paper.

The organization of the present paper is quite simple. In the following
section, after a preliminary subsection where a key lemma is proven, a proof
to the main theorem, based on an explicit construction of the transformation,
is given.

\section{Multi-dimensional Discrete Carr-Nadtochiy Transform}

\subsection{Preliminaries}
Let $d \in \mathbf{N}$. 
We set $B=\{ x = (x_1, \cdots , x_d) \in \mathbf{Z}^d;\ x_d >0\}$ and  
$\partial B= \{ x\ \in \mathbf{Z}^d;\ x_d=0\}$. 
We will obtain 
a discrete-time analogue of 
Carr-Nadtochiy transform 
when the process is a random walk, meaning that it is Markovian and 
at each time the process 
can move in the direction of 
the unit vector
$e_i= (0,\cdots , 0 , 1, 0, \cdots, 0)$, $ i=1, \cdots, d$, 
or to be precise, it is a
discrete-time time-homogeneous
Markov chain on $ \mathbf{Z}^d $ 
with starting at $x \in \mathbf{Z}^d$ 
which we will denote by
$ (Z_t^x)_
{t \in \mathbf{Z}_+ } $,
with 
\begin{equation*}
\sum_{i=1}^dP(Z_1^x=x  +e_i )  +\sum_{i=1}^dP(Z_1^x=x   - e_i) =1, 
\end{equation*} 
for any $x \in \mathbf{Z}^d$.
Here we further assume that
\begin{equation}\label{nondeg}
P(Z_1^x=x  +e_i),\ P(Z_1^x=x   - e_i) >0\ (i=1,\cdots, d) 
\end{equation} 
for any $x \in \mathbf{Z}^d$.

Let $\tau_{\partial B}$ be the first time when $X$ visit $\partial B$, that is, 
\begin{equation}
\tau_{\partial B} := \inf \{s > 0:\ Z_s^x = \partial B\}. 
\end{equation}
\if0
For a measurable function $ f $ which is zero on $ B^c$ with some regularity conditions, we call the correspondence
\begin{equation*}
f \mapsto g 
\end{equation*}
d-dimensional discrete Carr-Nadtochiy transform when 
a function $ g $
which is zero on $B\cup \partial B  $ satisfies 
\begin{equation}\label{eq-BNtrans2}
\mathbf{E} [ f(Z^x_t) ]
=  E[g(Z^x_t)],
\end{equation}
 for any $t\in \mathbf{N}$ and $x \in \partial B$. 
\fi

$Z_t^x \in \mathcal{S}(t,x) $, 
where 
\begin{equation*}
\begin{split}
\mathcal{S}(t,x) 
 = \{ (s, y) \in \mathbf{N} \times \partial B 
:\ &s \in \{1,\cdots, t\} ,\
|| y || =\sum_{i=1}^{d-1} |k_i| \leq t-s, 
\\& \qquad  
\text{$t-s-||y-x||$ is an even number}\} , 
\end{split}
\end{equation*} 
for each $t\in \mathbf{N}$ and $x \in \partial B$. 

\begin{lemma}\label{cover2}
For any $ (t,x),\  (s, y)\in \mathbf{N} \times \partial B$, 
\begin{equation*}
P(Z_t^x= y \pm s e_d  ) >0 
\end{equation*}
if and only if $ (s,y) \in \mathcal{S}(t,x) $. 
\end{lemma}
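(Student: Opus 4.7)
The plan is to reduce this to the standard reachability criterion for a nearest-neighbor random walk on $\mathbf{Z}^d$. The key observation is that at each step the walk $Z$ changes exactly one coordinate by $\pm 1$, so the quantity $\|Z_t^x - x\|_1$ shifts by $\pm 1$ at every step; in particular its parity must coincide with that of $t$, and it cannot exceed $t$. By the non-degeneracy assumption \eqref{nondeg}, every single-step transition in any of the $2d$ lattice directions has strictly positive probability, hence any feasible lattice path of length $t$ contributes a positive term to the corresponding probability.

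For the ``only if'' direction, I would fix any sample path realizing $Z_t^x = y \pm s e_d$ and write the total displacement as a telescoping sum
\[
(y-x) \pm s e_d \;=\; \sum_{r=1}^t \varepsilon_r\, e_{i_r}, \qquad \varepsilon_r \in \{-1,+1\},\ i_r \in \{1,\ldots,d\}.
\]
Because $x,y \in \partial B$ the vector $y-x$ is supported on the first $d-1$ coordinates while $\pm s e_d$ is supported on the $d$-th, so taking $\ell^1$-norms yields $\|y-x\|_1 + s \leq t$. Matching the parity of each coordinate separately forces $t - \|y-x\|_1 - s$ to be even, and $s \geq 1$ is built into the assumption $s \in \mathbf{N}$. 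These are exactly the conditions defining $\mathcal{S}(t,x)$.

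For the ``if'' direction, I would exhibit an explicit path achieving $y \pm s e_d$ at time $t$. Setting $m = \|y-x\|_1$ and $2k = t - m - s$ (nonnegative and even by hypothesis), the path is concatenated from three phases: first, $m$ steps moving the first $d-1$ coordinates from $x$ to $y$ in any fixed order; second, $s$ consecutive steps of $\pm e_d$ with one chosen sign; third, $k$ pairs of a $+e_1$ step followed immediately by a $-e_1$ step to fill the remaining $2k$ time slots. The endpoint is $y \pm s e_d$, every single transition has strictly positive probability by \eqref{nondeg}, and hence this realization contributes a positive weight to $P(Z_t^x = y \pm s e_d)$.

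The only real difficulty is notational bookkeeping: one must read the entries $k_i$ appearing in the definition of $\mathcal{S}(t,x)$ as the coordinates of $y-x$ (so that $\sum_{i=1}^{d-1}|k_i| = \|y-x\|_1$, both $y_d$ and $x_d$ vanishing on $\partial B$), and check that the parity condition lines up across the three phases of the construction above. Conceptually nothing beyond the familiar reachability fact for a lattice walk is needed.
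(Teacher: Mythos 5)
Your proof is correct and supplies in full the standard reachability-plus-parity argument that the paper itself compresses into the single line ``It is immediate from \eqref{nondeg}'': the telescoping displacement and coordinatewise parity count give necessity, and the explicit three-phase path (transport in the first $d-1$ coordinates, $s$ monotone steps in $e_d$, back-and-forth filler pairs) gives sufficiency, each step having positive probability by \eqref{nondeg}. Your reading of the typographically garbled norm condition in the definition of $\mathcal{S}(t,x)$ as $\|y-x\|_1 \le t-s$ is the right one, since that is what the parity clause and the proof of Lemma \ref{Lem-2-0} require.
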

\begin{proof}
It is immediate from \eqref{nondeg}.
\end{proof}
For $A \subset \mathbf{Z}^d$, denote by $\mathcal{M}_A^d$ the set of all functions 
$\mathbf{Z}^d \to \mathbf{R}$ with $\mathrm{supp} f \subset A$. 
Let $W^+_y : \mathcal{M}_B^d \to \mathcal{M}_{\mathbf{N}}^1$ and
$W^-_y : \mathcal{M}_{B^c \backslash \partial B}^d  \to \mathcal{M}_{\mathbf{N}}^1$
be defined by
\begin{equation*}
W^{+}_x h (t) = \mathbf{E}[h(Z^x_t) ], 
\end{equation*}
and 
\begin{equation*}
W^{-}_x h (t) = \mathbf{E}[h(Z^x_t)].
\end{equation*}
By Lemma  \ref{cover2}, 
$W^+_x f (t) $ is expressed by 
\begin{equation*}
W^+_x h (t)  = \sum_{(s,y) \in \mathcal{S}(t,x) } h(y + s e_d) P(Z_t^x =y + s e_d),
\end{equation*}
and 
\begin{equation*}
W^-_x h (t)  = \sum_{(s,y) \in \mathcal{S}(t,x) } h(y - s e_d) P(Z_t^x =y - s e_d). 
\end{equation*}

For each $ (t,x) \in \mathbf{N} \times \partial B$, we associate ``square matrices"
$ W^{\pm}_{t,x} $ as follows. 
Let 
\begin{equation*}
\mathcal{L}_{(t,x)}^{+}
:= \{ \{ h (z \pm ue_d) \}_{(u,z) \in \mathcal{S}(t,x) } \in \mathbf{R}^{\sharp \mathcal{S}(t,x)} : h \in \mathcal{M}_B^d  \}
\end{equation*}
and 
\begin{equation*}
\mathcal{L}_{(t,x)}^{-}
:= \{ \{ h (z+ue_d) \}_{(u,z) \in \mathcal{S}(t,x) } \in \mathbf{R}^{\sharp \mathcal{S}(t,x)} : h \in \mathcal{M}_{B^c \backslash \partial B}^d \}.
\end{equation*}
Define $ W^{\pm}_{t,x} $ by
\begin{equation*}
W^{\pm}_{t,x}  h (s,y) = W^{\pm}_y h (s)  = \sum_{(u,z) \in \mathcal{S}(s,y) } h(z \pm u e_d) P(Z_s^y =z \pm u e_d) 
\end{equation*}
for $(s,y) \in \mathcal{S}(t,x)$. 
Since $P(Z_s^y =z + u e_d) =0$ for $(u,z) \notin \mathcal{S}(s,y) $, 
we can regard  $W^{\pm}_{t,x} $
as a linear map 
from $ \mathcal{L}_{(t,x)}^{\pm} \simeq 
\mathbf{R}^{\sharp \mathcal{S}(t,x)} $
to $ \mathcal{M}^{d+1}_{\mathcal{S}(t,x)
} \simeq \mathbf{R}^{\sharp \mathcal{S}(t,x)}$, and therefore, we can identify it
with a square matrix, 
which is in fact invertible.  
\begin{lemma}\label{Lem-det}
For any $ (t,x) \in \mathbf{N} \times \partial B$, 
the determinant of $W^{\pm}_{t,x}$ is given by 
\begin{equation}\label{det-W}
\det W^{\pm}_{t,x}  = \prod_{(s,y) \in \mathcal{S}(t,x)}P(Z_s^y =y \pm s e_d)>0.
\end{equation}
\end{lemma}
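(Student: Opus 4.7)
The plan is to pick an ordering on $\mathcal{S}(t,x)$ that turns $W^{\pm}_{t,x}$ into a block lower triangular matrix whose diagonal blocks are themselves diagonal. The natural ordering is by the time coordinate $s$ (ties broken arbitrarily). Concretely, I will treat $W^{+}_{t,x}$; the argument for $W^{-}_{t,x}$ is identical after replacing $+e_d$ by $-e_d$ everywhere.

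Recall that the $\bigl((s,y),(u,z)\bigr)$-entry of the matrix is
\begin{equation*}
W^{+}_{t,x}\bigl[(s,y),(u,z)\bigr]=P\bigl(Z^{y}_{s}=z+ue_d\bigr).
\end{equation*}
The first step is to observe that because $y\in\partial B$ and $z\in\partial B$, the $d$-th coordinate of the chain must change from $0$ to $u$ in $s$ steps, which requires $u\le s$. Hence whenever $u>s$, the entry vanishes. Ordering rows and columns by $s$ (respectively $u$) in increasing order therefore produces a block lower triangular matrix in which the entries strictly above the $s=u$ block are all zero.

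The second step analyses the diagonal block corresponding to a fixed value $s=u$. Here the chain, starting from $y$ with $y_d=0$, must reach a point with $d$-th coordinate equal to $s$ in exactly $s$ steps. Every single step must therefore be a jump by $+e_d$; in particular the horizontal coordinates cannot change, forcing $z=y$. Consequently within the block of fixed $s$ the matrix is diagonal, and its diagonal entry at index $(s,y)$ equals
\begin{equation*}
P\bigl(Z^{y}_{s}=y+se_d\bigr)=\prod_{k=0}^{s-1}P\bigl(Z^{y+ke_d}_{1}=y+(k+1)e_d\bigr),
\end{equation*}
which is positive by the nondegeneracy assumption \eqref{nondeg}.

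Combining these two observations, the determinant of $W^{+}_{t,x}$ is the product of the diagonal entries over all $(s,y)\in\mathcal{S}(t,x)$, giving exactly \eqref{det-W} with the $+$ sign, and positivity follows from \eqref{nondeg}. The same argument, applied to paths that must move by $-e_d$ at every step, yields the formula for $W^{-}_{t,x}$. I do not foresee a genuine obstacle in this proof; the only care needed is to verify that the triangularisation is compatible with the parity and reachability constraints built into $\mathcal{S}(t,x)$, which is straightforward from Lemma~\ref{cover2}.
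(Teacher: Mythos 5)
Your proof is correct and is in substance the same as the paper's: the paper expands $\det W^{\pm}_{t,x}$ by the Leibniz formula and shows (Lemma \ref{Lem-2-0}, via a maximal-time-coordinate argument) that only the identity permutation contributes, which is precisely the block-lower-triangular structure with diagonal diagonal-blocks that you exhibit by ordering $\mathcal{S}(t,x)$ by the time coordinate. Both arguments rest on the same reachability facts --- the entry at $\bigl((s,y),(u,z)\bigr)$ vanishes unless $u\le s$, and $u=s$ forces $z=y$ --- so yours is a clean repackaging rather than a genuinely different route.
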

To prove Lemma \ref{Lem-det}, we need the following
\begin{lemma}\label{Lem-2-0}
For a finite subset  $ \mathcal{A} \subset \mathbf{N} \times \partial B$, we denote by 
$\mathrm{Sym}(\mathcal{A})$  
the symmetric group over $\mathcal{A}$, and for $ \sigma  \in \mathrm{Sym}(\mathcal{A}) $, 
we write
\begin{equation*}
\sigma(s,y) = (\sigma_1 (s,y),\ \sigma_2(s,y)), \quad (s,y) \in \mathcal{A}. 
\end{equation*}
For any $ \sigma \in  \mathrm{Sym}(\mathcal{A}) $, the following conditions are equivalent; 
\begin{enumerate}
\item[\rm{(i)}]
\begin{equation*}\label{equiv-2+}
\prod_{(s,y) \in \mathcal{A}}P(Z_s^y= \sigma_2 (s,y) + \sigma_1(s,y)  e_d  ) > 0.
\end{equation*}
\item[\rm{(ii)}] 
\begin{equation*}\label{equiv-2+}
\prod_{(s,y) \in \mathcal{A}}P(Z_s^y= \sigma_2 (s,y) - \sigma_1(s,y)  e_d  ) > 0.
\end{equation*}
\item[\rm{(iii)}]
$\sigma (s,y) \in \mathcal{S}(s,y)
$ for all $(s,y) \in \mathcal{A}$.
\item[\rm{(iv)}] 
$\sigma$ is the identity, i.e., $\sigma(s,y)=(s,y)$ for $(s,y) \in \mathcal{A}$.
\end{enumerate}
\end{lemma}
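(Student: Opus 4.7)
My plan is to deduce (i)$\Leftrightarrow$(iii) and (ii)$\Leftrightarrow$(iii) directly from Lemma \ref{cover2}, and then to establish (iii)$\Leftrightarrow$(iv) by a counting identity that exploits the fact that $\sigma$ is a bijection of the finite set $\mathcal{A}$.

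For the first pair of equivalences, each factor in the product in (i) is a non-negative real number, namely $P(Z^y_s = \sigma_2(s,y) + \sigma_1(s,y) e_d)$, so the whole product is strictly positive if and only if every factor is positive. Lemma \ref{cover2}, applied with starting point $y$ and target $\sigma_2(s,y)+\sigma_1(s,y)e_d$, says that such a single factor is positive iff $\sigma(s,y)\in \mathcal{S}(s,y)$, which is precisely (iii). The identical reasoning with the ``$-$'' sign gives (ii)$\Leftrightarrow$(iii).

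The direction (iv)$\Rightarrow$(iii) is a direct check: $(s,y)\in \mathcal{S}(s,y)$ because $P(Z^y_s=y+s e_d)>0$ by the nondegeneracy assumption \eqref{nondeg} (take $s$ consecutive $+e_d$ steps), and then Lemma \ref{cover2} applies.

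The main step, and the one that actually requires an idea, is (iii)$\Rightarrow$(iv). The first constraint in the definition of $\mathcal{S}(s,y)$ forces $\sigma_1(s,y)\leq s$ for every $(s,y)\in \mathcal{A}$. Because $\sigma$ permutes $\mathcal{A}$,
\begin{equation*}
\sum_{(s,y)\in \mathcal{A}} \sigma_1(s,y) \;=\; \sum_{(s,y)\in \mathcal{A}} s,
\end{equation*}
so the pointwise inequalities $\sigma_1(s,y)\le s$ must all be equalities: $\sigma_1(s,y)=s$ for every $(s,y)\in \mathcal{A}$. Feeding this back into the remaining defining conditions of $\mathcal{S}(s,y)$ reduces the horizontal-displacement bound to $\|\sigma_2(s,y)-y\|\le s - \sigma_1(s,y) = 0$, forcing $\sigma_2(s,y)=y$; hence $\sigma(s,y)=(s,y)$. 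The one step that is not mere unpacking is spotting this pigeonhole identity; after that, the rest is bookkeeping.
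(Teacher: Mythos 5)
Your proof is correct, and the two easy parts match the paper: (i)$\Leftrightarrow$(ii)$\Leftrightarrow$(iii) is exactly the paper's appeal to Lemma \ref{cover2} applied factor by factor, and (iv)$\Rightarrow$(iii) is the same observation that $(s,y)\in\mathcal{S}(s,y)$. Where you genuinely diverge is the key implication (iii)$\Rightarrow$(iv). The paper argues by contradiction with an extremal element: it takes the set $A$ of non-fixed points, picks $m=\max_{(s,y)\in A}s$, and shows that no element of $\mathcal{A}$ can be mapped by $\sigma$ onto a point $(m,w)\in A$ (non-fixed points of first coordinate $m$ must move to a strictly smaller first coordinate, and all other candidates have first coordinate at most their own, hence below $m$), contradicting surjectivity. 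You instead sum the pointwise inequality $\sigma_1(s,y)\le s$ over $\mathcal{A}$ and use the permutation identity $\sum\sigma_1(s,y)=\sum s$ to force equality everywhere at once. Both arguments rest on the same two structural facts extracted from the definition of $\mathcal{S}(s,y)$ --- that $\sigma_1(s,y)\le s$, and that $\sigma_1(s,y)=s$ forces $\sigma_2(s,y)=y$ via the norm bound $\|\sigma_2(s,y)-y\|\le s-\sigma_1(s,y)$ --- but your global averaging argument is arguably cleaner: it avoids the case analysis around the maximal level and dispenses with the auxiliary set $A$ entirely, at the cost of no additional hypotheses (finiteness of $\mathcal{A}$ is all that is needed, and it is assumed). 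One cosmetic caveat: you read the membership condition as $\|y-x\|\le t-s$ rather than the paper's literal $\|y\|\le t-s$; this is clearly the intended reading (it is the one consistent with the parity condition and with Lemma \ref{cover2}), and the paper's own proof uses it the same way, so nothing is lost.
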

\begin{proof}
By Lemma \ref{cover2}, 
(i), (ii) and (iii) are equivalent. 
Since $(s,y) \in \mathcal{S}(s,y) $ for any $(s,y) \in\mathbf{N} \times \partial B$, (iv) leads to (iii). 
It then remains to prove that (iii) implies (iv). 
Suppose that (iii) is satisfied
but $\sigma$ is not the identity. 
Then the set 
$A:= \{(s,y) \in \mathcal{A}:\ \sigma(s,y) \neq (s,y)\}$ is not empty.  Let $m := \max_{(s,y) \in A} s$. 
We will show that for any $(m,w) \in A$, there is no $(s,y) \in \mathcal{A}$ such that $\sigma (s,y) = (m,w)$, which is a contradiction. 
Since  $\sigma (m,w) \in \mathcal{S}(m,w) \backslash  (m,w) $ for any $w$ with  $(m,w) \in A$, 
we have that $\sigma_1 (m,w)  \neq m$. 
Moreover, for $(\tilde{m},w) \in A$ with $\tilde{m} \neq m$, 
since $\sigma (\tilde{m},w) \in \mathcal{S}(\tilde{m},w)$ is assumed, 
we see that $\sigma_1 (\tilde{m},w) \leq \tilde{m} $. 
Therefore $ \sigma_1 (s,y) \neq m $ for any $(s,y) \in A$.
\end{proof}
\begin{proof}[Proof of Lemma \ref{Lem-det}] 
By the definition of the determinant of a matrix, 
we have that
\begin{equation*}
\begin{split}
\det W^{\pm}_{t,x} & =
  \sum_{\sigma  \in \mathrm{Sym}(\mathcal{S}(t,x))} 
 \mathrm{sgn} (\sigma) 
\prod_{(s,y) \in \mathcal{S}(t,x)} 
P(Z_{s}^y=\sigma_2(s,y) \pm \sigma_1(s,y)  e_d ).
\end{split}\end{equation*}
Now \eqref{det-W} is clear by Lemma \ref{Lem-2-0}. 
\end{proof}

\subsection{Construction of the transform}
Since 
the matrix $W^{\pm}_{t,x}$
 is invertible by Lemma \ref{Lem-det}, 
 we can define a square matrix
\begin{equation*}
\mathcal{N}_{t,x} := (W^{-}_{t,x})^{-1} W^{+}_{t,x}, 
\end{equation*}
which is seen as a linear map 
from 
$ \mathcal{L}^+_{(t,x)}$ to $ \mathcal{L}^-_{(t,x)}$. 
The following lemma is essential for our result.  
\begin{lemma}\label{Lem-5}
For $(t,x)$ and $(\tilde{t},\tilde{x})\in \mathbf{N} \times \partial B$ with 
$ (\tilde{t},\tilde{x})  \in \mathcal{S} (t,x) $, it holds that 
\begin{equation*}
\mathcal{N}_{t,x} 
 = \mathcal{N}_{\tilde{t},\tilde{x}}, \qquad 
 \text{on $\mathcal{L}^+_{(\tilde{t},\tilde{x})}$}.
\end{equation*}
\end{lemma}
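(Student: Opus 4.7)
The plan is to reorder indices so that $\mathcal{S}(\tilde t, \tilde x) \subset \mathcal{S}(t, x)$ appears first; both matrices $W^\pm_{t,x}$ are then block lower-triangular with $W^\pm_{\tilde t, \tilde x}$ as the leading diagonal block, and the lemma falls out of block inversion. The key combinatorial input is a transitivity property of $\mathcal{S}$: whenever $(s, y) \in \mathcal{S}(\tilde t, \tilde x)$ and $(u, z) \in \mathcal{S}(s, y)$, one also has $(u, z) \in \mathcal{S}(\tilde t, \tilde x)$. The size bounds $u \le s \le \tilde t$ and $\|z\| \le s - u \le \tilde t - u$ are trivial. For the parity requirement $\tilde t - u - \|z - \tilde x\|$ even, I would use that $|a_i| \equiv a_i \pmod{2}$ for each integer $a_i$, hence $\|b\|_1 \equiv \sum_i b_i \pmod 2$ on $\mathbf{Z}^{d-1}$, which yields $\|z - \tilde x\|_1 \equiv \|z - y\|_1 + \|y - \tilde x\|_1 \pmod 2$; adding the two given parity conditions $\tilde t - s - \|y - \tilde x\|$ and $s - u - \|z - y\|$ delivers the desired conclusion.

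With transitivity in hand, the block structure is immediate. The entry $(W^\pm_{t,x})_{(s, y), (u, z)} = P(Z^y_s = z \pm u e_d) \mathbf{1}_{\{(u, z) \in \mathcal{S}(s, y)\}}$ depends only on $(s, y, u, z)$ and not on the ambient pair $(t, x)$. If $(s, y) \in \mathcal{S}(\tilde t, \tilde x)$ but $(u, z) \notin \mathcal{S}(\tilde t, \tilde x)$, then $(u, z) \notin \mathcal{S}(s, y)$ by transitivity, so the entry vanishes. After reordering, this yields
\[
W^\pm_{t, x} = \begin{pmatrix} W^\pm_{\tilde t, \tilde x} & 0 \\ A^\pm & B^\pm \end{pmatrix},
\]
with the leading diagonal block coinciding with $W^\pm_{\tilde t, \tilde x}$ by the ambient-independence of its entries.

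Finally, for $v \in \mathcal{L}^+_{(\tilde t, \tilde x)}$ embedded by zero-extension into $\mathcal{L}^+_{(t, x)}$, writing $\mathcal{N}_{t, x} \begin{pmatrix} v \\ 0 \end{pmatrix} = \begin{pmatrix} w_1 \\ w_2 \end{pmatrix}$ and reading off the top block of the defining equation $W^-_{t, x} \begin{pmatrix} w_1 \\ w_2 \end{pmatrix} = W^+_{t, x} \begin{pmatrix} v \\ 0 \end{pmatrix}$ gives $W^-_{\tilde t, \tilde x} w_1 = W^+_{\tilde t, \tilde x} v$. Invertibility of $W^-_{\tilde t, \tilde x}$ by Lemma \ref{Lem-det} then yields $w_1 = \mathcal{N}_{\tilde t, \tilde x} v$, which is the asserted equality on $\mathcal{S}(\tilde t, \tilde x)$. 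The main obstacle will be the parity bookkeeping inside the transitivity step — elementary but easy to write down incorrectly; once it is set up, the block-triangular structure and the block inversion are purely formal.
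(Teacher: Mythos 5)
Your proof is correct and follows essentially the same route as the paper: both arguments rest on the observation that $W^{\pm}_{\tilde t,\tilde x}$ sits inside $W^{\pm}_{t,x}$ as the leading block indexed by $\mathcal{S}(\tilde t,\tilde x)$, and then invoke invertibility (Lemma \ref{Lem-det}) to transfer this to $\mathcal{N}$. The only difference is one of explicitness: you spell out the transitivity of $\mathcal{S}$ (with the parity bookkeeping) and the vanishing of the off-diagonal block, which the paper compresses into the single line ``by the definition of $W^{\pm}_{y}$ and $W^{\pm}_{t,x}$'' --- your block-inversion step, which reads off only the top block and correctly allows $w_2 \neq 0$, is in fact a more careful rendering of the paper's somewhat loose claim that $(W^{\pm}_{t,x})^{-1}=(W^{\pm}_{\tilde t,\tilde x})^{-1}$ on $\mathcal{M}^{d+1}_{\mathcal{S}(\tilde t,\tilde x)}$.
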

\begin{proof}
Let $h^{\pm} \in \mathcal{L}^{\pm}_{(\tilde{t},\tilde{x})}$ and $(s,y) \in \mathcal{S} (\tilde{t},\tilde{x}) $. 
By the definition of $W^{\pm}_{y}$ and $W^{\pm}_{t,x}$, it holds that  
\begin{equation*}
W^{\pm}_{t,x}  h^{\pm} (s,y) = W^{\pm}_y h^{\pm} (s)  = W^{\pm}_{\tilde{t},\tilde{x}}  
h^{\pm} (s,y)
. 
\end{equation*}
Therefore we see   
that $(W^{\pm}_{t,x})^{-1}  = (W^{\pm}_{\tilde{t},\tilde{x}})^{-1} $ on 
$\mathcal{M}_{\mathcal{S} (\tilde{t},\tilde{x})}^{d+1} $, and 
hence 
\begin{equation*}
\begin{split}
\mathcal{N}_{t,x} 
&=  (W^{-}_{t,x})^{-1} W^{+}_{t,x} 
\\&=  (W^{-}_{\tilde{t},\tilde{x}})^{-1} W^{+}_{\tilde{t},\tilde{x}}  
\\& = \mathcal{N}_{\tilde{t},\tilde{x}} .
\end{split}
\end{equation*}
\end{proof}

Since $ \mathcal{L}_{(t,x)}^{+} $ 
(resp. $ \mathcal{L}_{(t,x)}^{-} $)
is projected from 
$ \mathcal{M}_B^d $ (resp. $ \mathcal{M}_{B^c \backslash \partial B}^d $), 
we can extend $ \mathcal{N}_{t,x} $
to a map from $ \mathcal{M}_B^d $ to 
$ \mathcal{M}_{B^c \backslash \partial B}^d $. 
Define $ \mathcal{N} :\mathcal{M}_B^d \to \mathcal{M}_{B^c \backslash \partial B}^d $ 
by
\begin{equation*}
\mathcal{N}h(x-t e_d) := \mathcal{N}_{t,x} h (x-t e_d) 
\qquad \text{for $ (t,x) \in \mathbf{N}  \times \partial B$} .
\end{equation*}
\begin{lemma}\label{Lem-6}
For $ (t,x) $ and $ (\tilde{t},\tilde{x}) \in \mathbf{N} \times \partial B$ 
such that $(\tilde{t},\tilde{x}) \in  \mathcal{S}(t,x)$, 
we have that 
\begin{equation*}
\mathcal{N}h(\tilde{x}-\tilde{t} e_d) = \mathcal{N}_{t,x} h (\tilde{x}-\tilde{t} e_d) .
\end{equation*} 
\end{lemma}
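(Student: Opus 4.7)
The plan is to derive Lemma~\ref{Lem-6} as an almost immediate consequence of Lemma~\ref{Lem-5}, by unwinding the pointwise definition of $\mathcal{N}$ at the single point $\tilde{x}-\tilde{t}e_d$.

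First, I would observe that every point $p\in B^c\setminus\partial B$ admits a unique expression of the form $p=x-te_d$ with $t\in\mathbf{N}$ and $x\in\partial B$ (namely $t=-p_d$ and $x$ the orthogonal projection of $p$ onto $\partial B$). Consequently, applying the definition of $\mathcal{N}$ at the point $\tilde{x}-\tilde{t}e_d$, with the pair $(\tilde{t},\tilde{x})$ itself playing the role of $(t,x)$ in the defining formula, yields
\[
\mathcal{N}h(\tilde{x}-\tilde{t}e_d)\;=\;\mathcal{N}_{\tilde{t},\tilde{x}}\,h(\tilde{x}-\tilde{t}e_d).
\]
Hence the task reduces to establishing $\mathcal{N}_{\tilde{t},\tilde{x}}\,h(\tilde{x}-\tilde{t}e_d)=\mathcal{N}_{t,x}\,h(\tilde{x}-\tilde{t}e_d)$.

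Second, I would invoke Lemma~\ref{Lem-5} directly. The restriction of $h\in\mathcal{M}^d_B$ to the finite set $\{z+ue_d:(u,z)\in\mathcal{S}(\tilde{t},\tilde{x})\}$ is by definition an element of $\mathcal{L}^+_{(\tilde{t},\tilde{x})}$, and the hypothesis $(\tilde{t},\tilde{x})\in\mathcal{S}(t,x)$ is exactly the premise of Lemma~\ref{Lem-5}. That lemma therefore yields the identity of vectors
\[
\mathcal{N}_{t,x}\,h\;=\;\mathcal{N}_{\tilde{t},\tilde{x}}\,h\qquad\text{in }\mathcal{L}^-_{(\tilde{t},\tilde{x})}.
\]
Reading off the coordinate indexed by $(\tilde{t},\tilde{x})$---an admissible index since $(s,y)\in\mathcal{S}(s,y)$ for every $(s,y)$ (as recorded in Lemma~\ref{Lem-2-0}) and since $(\tilde{t},\tilde{x})\in\mathcal{S}(t,x)$ by assumption---then produces the required pointwise equality.

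No single step looks delicate: the statement of Lemma~\ref{Lem-6} is essentially a pointwise translation of the abstract identity supplied by Lemma~\ref{Lem-5}. The only bookkeeping I would have to watch is the interplay between $h$ regarded as a function on $\mathbf{Z}^d$ and $h$ regarded as an element of the finite-dimensional spaces $\mathcal{L}^+_{(t,x)}$ or $\mathcal{L}^+_{(\tilde{t},\tilde{x})}$. This is harmless, however, because both $\mathcal{N}_{t,x}$ and $\mathcal{N}_{\tilde{t},\tilde{x}}$ depend on $h$ only through its values on the relevant sampling set, and this compatibility is precisely what Lemma~\ref{Lem-5} already absorbs.
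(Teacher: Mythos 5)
Your proposal is correct and follows exactly the route the paper intends: the paper's own proof is simply ``It is immediate by Lemma \ref{Lem-5},'' and your write-up just spells out the two steps (unwinding the pointwise definition of $\mathcal{N}$ at $\tilde{x}-\tilde{t}e_d$ via the unique representation $p=x-te_d$, then reading off the $(\tilde{t},\tilde{x})$-coordinate of the vector identity from Lemma \ref{Lem-5}). No gaps; the bookkeeping you flag is handled exactly as the paper's definitions require.
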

\begin{proof}
It is immediate by Lemma \ref{Lem-5}. 
\end{proof}

The following is our main result. 
\begin{theorem}\label{theorem7}
For $(t,x) \in \mathbf{N} \times \partial B$ and 
$ f\in \mathcal{M}_B $, we have that   
\begin{equation}\label{thm-e}
\mathbf{E} [ f(Z^x_t) ]
=  E[\mathcal{N}f (Z^x_t)].
\end{equation}
\end{theorem}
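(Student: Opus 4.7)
The statement is essentially unpacking definitions: $\mathcal{N}_{t,x}$ has been built so that $W^-_{t,x}\mathcal{N}_{t,x} = W^+_{t,x}$ as linear maps on $\mathcal{L}^+_{(t,x)}$, and Lemma \ref{Lem-6} promotes this matrix-level identity to an identity of functions. The whole proof amounts to evaluating both sides of \eqref{thm-e} as finite sums indexed by $\mathcal{S}(t,x)$ and matching them termwise.

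First I would rewrite the left-hand side using Lemma \ref{cover2}. Since $f\in\mathcal{M}_B^d$ vanishes off $B$, and since $Z^x_t$ only puts mass on points of the form $y\pm s e_d$ with $(s,y)\in\mathcal{S}(t,x)$, the only surviving contributions are of the form $y+s e_d$, giving
\begin{equation*}
\mathbf{E}[f(Z^x_t)] \;=\; \sum_{(s,y)\in\mathcal{S}(t,x)} f(y+se_d)\,P(Z^x_t=y+se_d).
\end{equation*}
Because $(t,x)\in\mathcal{S}(t,x)$, the right-hand side is exactly the $(t,x)$-entry $(W^+_{t,x}f)(t,x)$ of the matrix action. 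By the very definition $\mathcal{N}_{t,x}=(W^-_{t,x})^{-1}W^+_{t,x}$, this equals $(W^-_{t,x}\mathcal{N}_{t,x}f)(t,x)$, which unfolds to
\begin{equation*}
\sum_{(s,y)\in\mathcal{S}(t,x)} (\mathcal{N}_{t,x}f)(y-se_d)\,P(Z^x_t=y-se_d).
\end{equation*}

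Next I would invoke Lemma \ref{Lem-6} termwise: for each $(s,y)\in\mathcal{S}(t,x)$ one has $(\mathcal{N}_{t,x}f)(y-se_d)=(\mathcal{N}f)(y-se_d)$, so the displayed sum becomes $\sum_{(s,y)\in\mathcal{S}(t,x)}(\mathcal{N}f)(y-se_d)P(Z^x_t=y-se_d)$. Finally, since $\mathcal{N}f\in\mathcal{M}^d_{B^c\setminus\partial B}$ and, by Lemma \ref{cover2}, the only lattice points in $B^c\setminus\partial B$ reached by $Z^x_t$ with positive probability are of the form $y-se_d$ with $(s,y)\in\mathcal{S}(t,x)$, this last sum is nothing but $\mathbf{E}[\mathcal{N}f(Z^x_t)]$, completing the identification.

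I do not foresee a genuinely hard step; all the substance has already been absorbed into the earlier lemmas—invertibility (Lemma \ref{Lem-det}), reference-point compatibility of the inverse (Lemma \ref{Lem-5}), and the well-definedness of the global extension $\mathcal{N}$ (Lemma \ref{Lem-6}). The only point requiring mild care is the bookkeeping that identifies the two sums over $\mathcal{S}(t,x)$ with genuine expectations of $f(Z^x_t)$ and $\mathcal{N}f(Z^x_t)$; this is exactly where the support hypotheses $\mathrm{supp}\,f\subset B$ and $\mathrm{supp}\,\mathcal{N}f\subset B^c\setminus\partial B$ are used, together with Lemma \ref{cover2} to discard the lattice points outside $\mathcal{S}(t,x)$ as having zero probability.
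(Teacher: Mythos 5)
Your proposal is correct and follows essentially the same route as the paper's own proof: identify $\mathbf{E}[f(Z^x_t)]$ with $W^+_{t,x}f(t,x)$, rewrite it as $W^-_{t,x}\mathcal{N}_{t,x}f(t,x)$ via the definition of $\mathcal{N}_{t,x}$, and then use Lemma \ref{Lem-6} termwise to replace $\mathcal{N}_{t,x}f$ by $\mathcal{N}f$ before reassembling the sum as $\mathbf{E}[\mathcal{N}f(Z^x_t)]$. The only cosmetic difference is that you unfold the matrix actions into explicit sums over $\mathcal{S}(t,x)$ from the start, whereas the paper keeps the operator notation until the final step.
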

\begin{proof}
By the definition of $W_x^{\pm}$ and $W_{t,x}^{\pm}$, we know that 
for $h^+ \in \mathcal{M}_B^d$ and $h^- \in \mathcal{M}_{B^c \backslash \partial B}^d$,  
\begin{equation}\label{thm-eq}
\mathbf{E} [ h^{\pm}(Z^x_t) ]
= W_x^{\pm} h^{\pm}(t)= W_{t,x}^{\pm} h^{\pm}(t,x). 
\end{equation}
Since 
\begin{equation*}
W_{t,x}^{+} =W^{-}_{t,x}(W^{-}_{t,x})^{-1}W_{t,x}^{+}
=W^{-}_{t,x}\mathcal{N}_{t,x}, 
\qquad \text{on } \mathcal{L}_{(t,x)}^+
\end{equation*}
we have that 
\begin{equation}\label{eq-BNtrans3}
\begin{split}
\mathbf{E} [ f(Z^x_t) ]
&=W^{+}_{t,x} f (t,x)
\\& =  W^{-}_{t,x}(W^{-}_{t,x})^{-1}W^{+}_{t,x} f (t,x) 
\\& =  W^{-}_{t,x}\mathcal{N}_{t,x} f (t,x). 
\end{split}
\end{equation}
On the other hand, by \eqref{thm-eq} for $ W^{-}_{t,x} $, we then obtain
\begin{equation}\label{eq-thm71}
W^{-}_{t, x}\mathcal{N}_{t,x} f (t,x)
=
W^{-}_{x}\mathcal{N}_{t,x} f(t) 
= \mathbf{E}
[\mathcal{N}_{t,x} f (Z_t^x)]
.
\end{equation}
Thanks to Lemma \ref{Lem-6}, we notice that
\begin{equation}\label{eq-thm7}
\begin{split}
\mathbf{E}[\mathcal{N}_{t,x} f (Z_t^x)]
&= \sum_{(s,y) \in \mathcal{S} (t,x) }\mathcal{N}_{t,x} f (y-se_d) P(Z_t^x = y-se_d)
\\&= \sum_{(s,y) \in \mathcal{S} (t,x) }\mathcal{N}f (y-se_d) P(Z_t^x = y-se_d) .
\\&=\mathbf{E}[\mathcal{N} f (Z_t^x)]
\end{split}
\end{equation}
By combining 
\eqref{eq-BNtrans3}, \eqref{eq-thm71} and \eqref{eq-thm7},
we have the assertion. 
\end{proof}

\subsection{Uniqueness} 
The map $ \mathcal{N} $ could be called 
Carr-Nadtochiy transform for 
the Markov chain $ Z $.
We can prove the uniqueness of the transform:
\begin{theorem}
If a map $\mathcal{N}' :\ \mathcal{M}_{B}^d \rightarrow \mathcal{M}_{B^c \backslash \partial B }^d$
satisfies \eqref{thm-e}
for any $ (t,x) \in \mathbf{N}  \times \partial B $, 
then $ \mathcal{N}'=  \mathcal{N} $. 
\end{theorem}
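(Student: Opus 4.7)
The plan is to show that under the hypothesis $\mathcal{N}'$ must satisfy, for every $(t,x) \in \mathbf{N} \times \partial B$, exactly the same matrix identity that defined $\mathcal{N}_{t,x}$ in the construction, and then to let $(t,x)$ vary so that the relevant points exhaust $B^c \setminus \partial B$.

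First I would fix an arbitrary $f \in \mathcal{M}_B^d$ and $(t,x) \in \mathbf{N} \times \partial B$. For every $(s,y) \in \mathcal{S}(t,x)$, one has $(s,y) \in \mathbf{N} \times \partial B$ as well, so the assumption \eqref{thm-e} applied at $(s,y)$, combined with the support properties of $f$ (supported in $B$) and $\mathcal{N}'f$ (supported in $B^c \setminus \partial B$), rewrites as
\begin{equation*}
W^{+}_{t,x} f(s,y) \;=\; W^{-}_{t,x} \mathcal{N}' f(s,y).
\end{equation*}
Collecting these equalities over all $(s,y) \in \mathcal{S}(t,x)$ gives one vector equation $W^{+}_{t,x} f = W^{-}_{t,x} \mathcal{N}' f$ in $\mathbf{R}^{\sharp \mathcal{S}(t,x)}$.

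Next I would invoke Lemma \ref{Lem-det}, which guarantees that $W^{-}_{t,x}$ is invertible. Left-multiplying by $(W^{-}_{t,x})^{-1}$ therefore forces the restriction of $\mathcal{N}' f$ to $\{y - s e_d : (s,y) \in \mathcal{S}(t,x)\}$ to coincide with $\mathcal{N}_{t,x} f$, which by the definition of $\mathcal{N}$ and Lemma \ref{Lem-6} equals $\mathcal{N} f$ on the same set. Hence $\mathcal{N}' f(y - s e_d) = \mathcal{N} f(y - s e_d)$ for every $(s,y) \in \mathcal{S}(t,x)$.

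The final step is a covering argument: I would check that as $(t,x)$ ranges over $\mathbf{N} \times \partial B$, the points $y - s e_d$ with $(s,y) \in \mathcal{S}(t,x)$ cover every $z \in B^c \setminus \partial B$. Indeed, for such a $z$, set $s := -z_d \geq 1$ and $y := z + s e_d \in \partial B$; then the choice $x := 0$ and $t := s + \|y\|$ satisfies $s \leq t$, $\|y\| \leq t-s$, and $t - s - \|y - x\| = 0$, so $(s,y) \in \mathcal{S}(t,x)$. Consequently $\mathcal{N}' f$ and $\mathcal{N} f$ agree on $B^c \setminus \partial B$, and since both vanish on its complement they are equal. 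As $f$ was arbitrary, $\mathcal{N}' = \mathcal{N}$. There is no serious obstacle here; the only point requiring a moment of care is this last elementary covering, which is otherwise obscured by the notation of $\mathcal{S}(t,x)$.
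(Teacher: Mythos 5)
Your proposal is correct and follows essentially the same route as the paper: both reduce the hypothesis to the vector identity $W^{-}_{t,x}\mathcal{N}'f = W^{-}_{t,x}\mathcal{N}f$ over $\mathcal{S}(t,x)$ and cancel $W^{-}_{t,x}$ using its invertibility from Lemma \ref{Lem-det}. The only difference is presentational: you make explicit the final covering of $B^c \setminus \partial B$ by points $y - s e_d$, which the paper leaves implicit by letting $(t,x)$ range over all of $\mathbf{N}\times\partial B$ and evaluating at $x - t e_d$.
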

\begin{proof}
Fix arbitrary $(t,x) \in \mathbf{N} \times \partial B$ and $f \in \mathcal{M}_{B}^d$. Since 
$\mathcal{N}$ and $\mathcal{N}'$ satisfy \eqref{thm-e}, we have that 
\begin{equation*}\label{thm8-eq1}
W_{t,x}^{-} \mathcal{N}'f(t,x) = 
\mathbf{E} [ \mathcal{N}'f(Z^x_t) ]= \mathbf{E} [ f(Z^x_t) ],
\end{equation*}
and 
\begin{equation*}\label{thm8-eq2}
W_{t,x}^{-} \mathcal{N}f(t,x) = 
\mathbf{E} [ \mathcal{N}f(Z^x_t) ]= \mathbf{E} [ f(Z^x_t) ].
\end{equation*}
Therefore we obtain that 
\begin{equation*}\label{thm8-eq2}
W_{t,x}^{-} \mathcal{N}'f(t,x)
=W_{t,x}^{-} \mathcal{N}f(t,x).
\end{equation*}
Since the matrix $W_{t,x}^{-}$ is invertible by Lemma \ref{Lem-det}, we conclude that 
\begin{equation*}
\begin{split}
\mathcal{N}'f(x - t e_d)& = 
(W_{t,x}^{-} )^{-1} W_{t,x}^{-} \mathcal{N}'f(t,x)
\\& =(W_{t,x}^{-} )^{-1} W_{t,x}^{-} \mathcal{N}f(t,x)
\\& =\mathcal{N}f(x - t e_d).
\end{split} 
\end{equation*}
\end{proof}

\subsection{An explicit form}
An explicit form of
$ \mathcal{N} $ 
is given as follows: 
\begin{proposition}\label{prop}
We have that 
\begin{equation*}
\mathcal{N} f(x - t e_d)\\
 = 
\sum_{(s,y) \in \mathcal{S}(t,x)} c_{t,x}(s,y) f(y+se_d),\qquad 
\text{ $(t, x)  \in \mathbf{N} \times \partial  B$ and $f\in\mathbf{M}_{B^c \backslash \partial B}$},
\end{equation*}
where
\begin{equation}\label{def-c-d}
\begin{split}
c_{t,x}({s,y}) &=\frac{1}{\prod_{(l,w) \in \mathcal{S}(t,x)} P(Z_l^{w}= w -l e_d ) } 
\\& \qquad\times   \sum_{\sigma  \in {\rm Sym}(\mathcal{S}(t,x))} 
{\rm sgn} (\sigma) 
P(Z_{\sigma_1(t,x)}^{\sigma_2(t,x)}= y +s e_d )
\\& \qquad \times  
\prod_{(h,z) \in \mathcal{S}(t,x) \backslash \{(t,x)\}} P(Z_{\sigma_1(h,z)}^{ \sigma_2(h,z)}= z -h e_d),
\end{split}
\end{equation}
for $(s,y)  \in \mathbf{N}\times \partial B$.
\end{proposition}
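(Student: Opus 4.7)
The plan is to apply Cramer's rule to the linear system $W^{-}_{t,x} v = W^{+}_{t,x} f$. Since $W^{-}_{t,x}$ is invertible by Lemma \ref{Lem-det}, its unique solution is $v = \mathcal{N}_{t,x} f \in \mathcal{L}^{-}_{(t,x)}$. By Lemma \ref{Lem-6} the component of $v$ at index $(u,z) = (t,x)$ is precisely $\mathcal{N} f(x - t e_d)$, so the whole task reduces to computing a single Cramer's-rule ratio.

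First, Lemma \ref{Lem-det} already identifies the denominator
\begin{equation*}
\det W^{-}_{t,x} = \prod_{(l,w) \in \mathcal{S}(t,x)} P(Z^w_l = w - l e_d),
\end{equation*}
matching the prefactor in \eqref{def-c-d}. Cramer's rule then gives
\begin{equation*}
\mathcal{N} f(x - t e_d) = \frac{\det M}{\det W^{-}_{t,x}},
\end{equation*}
where $M$ is obtained from $W^{-}_{t,x}$ by replacing its column indexed by $(t,x)$ with the vector $W^{+}_{t,x} f$. I would then expand $\det M$ by the Leibniz formula in the convention $\det A = \sum_\sigma \mathrm{sgn}(\sigma) \prod_i A_{\sigma(i), i}$ that is already used in the proof of Lemma \ref{Lem-det}. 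For $(h,z) \neq (t,x)$ the factor is $(W^{-}_{t,x})_{\sigma(h,z), (h,z)} = P(Z^{\sigma_2(h,z)}_{\sigma_1(h,z)} = z - h e_d)$, while the factor at $(h,z) = (t,x)$ comes from the replaced column and equals
\begin{equation*}
(W^{+}_{t,x} f)(\sigma(t,x)) = \sum_{(s,y) \in \mathcal{S}(\sigma(t,x))} f(y + s e_d) \, P(Z^{\sigma_2(t,x)}_{\sigma_1(t,x)} = y + s e_d).
\end{equation*}
Interchanging the two sums and pulling $f(y + s e_d)$ outside, the inner expression in brackets is exactly $c_{t,x}(s,y) \cdot \det W^{-}_{t,x}$ by the definition \eqref{def-c-d}. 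The outer sum effectively ranges over $(s,y) \in \mathcal{S}(t,x)$, because $\mathcal{S}(\sigma(t,x)) \subset \mathcal{S}(t,x)$ for every $\sigma$ (since $\sigma(t,x) \in \mathcal{S}(t,x)$ and $\mathcal{S}(\cdot)$ is nested along such reachability chains, using the additivity mod $2$ of the $L^{1}$ parity together with $\sigma_1(t,x) \leq t$). Dividing by $\det W^{-}_{t,x}$ yields the formula.

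The main obstacle is purely bookkeeping: the indices $(u,z) \in \mathcal{S}(t,x)$ play a dual role as both basis labels for $\mathcal{L}^{\pm}_{(t,x)}$ and row/column labels for the matrices, and one must be careful to replace the column, not the row, indexed by $(t,x)$ in order to extract the correct component of $\mathcal{N}_{t,x} f$. A minor notational trap is that \eqref{def-c-d} uses the transpose-like convention $A_{\sigma(i), i}$ rather than $A_{i,\sigma(i)}$; since $\det A = \det A^{\top}$ this is harmless, but the convention must be applied consistently throughout the expansion.
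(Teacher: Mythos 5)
Your proposal is correct and follows essentially the same route as the paper: the paper's own proof likewise starts from $\mathcal{N}f(x-te_d) = (W^{-}_{t,x})^{-1}W^{+}_{t,x}f(t,x)$ and simply invokes Lemma \ref{Lem-det} together with Cramer's rule, leaving the Leibniz-expansion bookkeeping implicit. You have merely carried out that bookkeeping explicitly (including the inclusion $\mathcal{S}(\sigma(t,x))\subset\mathcal{S}(t,x)$ needed to interchange the sums), which the paper omits.
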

\begin{remark}
Thanks to Lemma \ref{cover2}, \eqref{def-c-d} is well-defined since $\prod_{(l,w) \in \mathcal{S}(t,x)} P(Z_l^{w}= w -l e_d ) $ is not zero. 
\end{remark}
\begin{proof}[Proof of Proposition \ref{prop}]
Recall that 
\begin{equation*}
\mathcal{N}f(x-t e_d) =(W^{-}_{t,x})^{-1} W^{+}_{t,x}f (t,x) 
.\end{equation*}
Here we note that the matrices are given by 
\begin{equation*}
W^{\pm}_{t,x} = \{P(Z_{s}^y = y' \pm s' e_d):\ {(s,y),\ (s',y') \in \mathcal{S}(t,x)}\}. 
\end{equation*}
By Lemma \ref{Lem-det} and Cramer's rule, we have the assertion. 
\end{proof}

\if1 
We note that $c_{t,x}({s,y})=0$ for  $(s,y) \notin  \mathcal{S}(t,x) $ by Lemma \ref{cover2} 
since $ \mathcal{S} (\sigma(t,x)) )$ is a subset of  $\mathcal{S}(t,x)$ for any $\sigma  \in {\rm Sym}(\mathcal{S}(t,x))$.
\fi
\section*{Acknowledgement}
This research was supported by JSPS KAKENHI Grant Number $24840042$.


\end{document}